\theoremstyle{definition}
\newtheorem{definition}{Definition}[section]
\theoremstyle{plain}
\newtheorem{lemma}[definition]{Lemma}
\newtheorem{theorem}[definition]{Theorem}
\newtheorem{proposition}[definition]{Proposition}
\newtheorem{corollary}[definition]{Corollary}
\theoremstyle{remark}
\newtheorem{remark}[definition]{Remark}
\begin{document}

\title[Simple product]{Simple product and locally o-minimal theories}
\author[M. Fujita]{Masato Fujita}
\address{Department of Liberal Arts,
Japan Coast Guard Academy,
5-1 Wakaba-cho, Kure, Hiroshima 737-8512, Japan}
\email{fujita.masato.p34@kyoto-u.jp}

\begin{abstract}
There exist NIP and non-NTP$_2$ theories satisfying the following conditions:
It is not o-minimal;
All models are strongly locally o-minimal; 
It has a model which is an expansion of the linearly ordered abelian group over the reals $(\mathbb R\;;\;0,+,<)$.
We construct these examples using the notion of simple product.
\end{abstract}

\subjclass[2020]{Primary 03C64}

\keywords{locally o-minimal theory; simple product; NIP theory; NTP$_2$ theory}

\maketitle

\section{Introduction}\label{sec:intro}
Under Shelah's classification program and the works influenced by this program, theories having or not having different combinatorial configurations are introduced and investigated, such as stable theories which are found in standard texts of model theories, simple theories \cite{KP}, dp-minimal theories \cite{S2,S3}, NIP theories \cite{S} and NTP$_2$ theories \cite{Sh,C,YC}.
O-minimal theories \cite{vdD,KPS,PS} and their relatives \cite{DMS,MMS,TV} have different taste.
They are extensions of linear orders.
They often treat structures rather than theories, and they consider definable sets rather than types.

We can classify o-minimal theories and their relatives from the view point of combinatorial configurations in several cases.
For instance, o-minimal theories and their relatives are not simple in many case because the theory of dense linear order without endpoints is not simple by the definition of simple theories. 
A weakly o-minimal structure is dp-minimal \cite[A.1.3]{S} and it is NIP.
Dense pairs of o-minimal structures \cite{vdD2} are NIP by \cite{CS}.
NIP structures having o-minimal open cores are constructed in \cite{HNW}.
NIP expansions of the ordered group of reals are studied in \cite{W2}.

This paper treats locally o-minimal theories, which is a relative of o-minimal theories.
The notion of local o-minimality was first studied in \cite{TV} and has been studied in \cite{F,KTTT,Fuji,Fuji2,Fuji3,Fuji4}.
A non-NIP locally o-minimal theory is already constructed in \cite[Example 5.17]{F}.
The example is the ultraproduct of o-minimal theories.
In this paper, we construct much more well-behaved locally o-minimal theory which is NIP and not NTP$_2$.
Recall that non-NTP$_2$ implies non-NIP.
We construct NIP and non-NTP$_2$ locally o-minimal theories satisfying the following conditions:
\begin{itemize}
\item It is not o-minimal;
\item All models are strongly locally o-minimal; 
\item It has a model which is an expansion of the linearly ordered abelian group over the reals $(\mathbb R\;;\;0,+,<)$.
\end{itemize}
Locally o-minimal expansions of the linearly ordered abelian group over the reals $(\mathbb R\;;\;0,+,<)$ enjoy tame topological properties unavailable in general locally o-minimal structures by \cite{Fuji5}.
The above example indicates that the both notions of NIP and NTP$_2$ fail to classify these locally o-minimal theories enjoying very good topological properties.
It contrasts that dp-minimality successfully classifies o-minimal theories in definably complete extensions of ordered groups (Proposition \ref{prop:dp-minimal}).

When we construct the examples, we use the notion of simple product introduced in \cite{KTTT}.
It is used in \cite{W} so as to investigate the structure $(\mathcal R,\alpha\mathbb Z)$, which is a generalization of \cite{BC}.
Here, $\mathcal R$ is an o-minimal expansion of the ordered group of reals.
Simple product is originally defined for structures but not for theories.
We develop the notion of the simple products of theories in this paper.
The theory of the simple product inherits the properties of the original theories.
It is a very good tool to construct locally o-minimal examples.

This brief note consists of four sections.
Section \ref{sec:simple} is the main body of this note.
We give the definition of simple product here.
We demonstrate that the simple product of two theories are NIP if and only if the original two theories are NIP in Section \ref{sec:nip}.
We also show a similar assertion for distal theories.
Finally, in Section \ref{sec:locally}, we construct the theories introduced in this section.

\section{Modeling simple product}\label{sec:simple}
We first formalize the Cartesian product of sets in a model theory.
We employ one-sort formalization of the  product in this paper.

\begin{definition}[Theory of Cartesian product]\label{def:cartesian}
Let $L_{\times}$ be the language consisting of two binary predicates $\sim_1$ and $\sim_2$.
The theory $T_{\times}$ consists of the following sentences:
\begin{enumerate}
\item[(1)] The relations $\sim_1$ and $\sim_2$ are equivalence relations;
\item[(2)] $(\forall x) (\forall y) ((x \sim_1 y) \wedge (x \sim_2 y) \rightarrow x=y)$;
\item[(3)] $(\forall x) (\forall y) (\exists z) (x \sim_1 z) \wedge (y \sim_2 z)$.
\end{enumerate}
For all tuples $\overline{x}=(x_1,\ldots, x_n)$ and $\overline{y}=(y_1,\ldots, y_n)$, the notation $\overline{x} \sim_1 \overline{y}$ denotes the formula $\bigwedge_{i=1}^n (x_i \sim_1 y_i)$.
We define $\overline{x} \sim_2 \overline{y}$, similarly.
\end{definition}

\begin{proposition}\label{prop:cartesian}
Let $S_1$ and $S_2$ be nonempty sets.
Let $\pi_i:S_1 \times S_2 \rightarrow S_k$ be the projection onto the $k$-th coordinate for $k=1,2$.
We set 
$$x \sim_k^{\mathcal M_{\times}} y \Leftrightarrow \pi_k(x)=\pi_k(y)\text{,}$$
where $x$ and $y$ are elements in $S_1 \times S_2$.
The tuple $\mathcal M_{\times}=(S_1 \times S_2;\sim_1^{\mathcal M_{\times}},\sim_2^{\mathcal M_{\times}})$ is a model of $T_{\times}$.

Conversely, any model $\mathcal M=(S;\sim_1^{\mathcal M},\sim_2^{\mathcal M})$ of $T_{\times}$ is isomorphic to the Cartesian product model $\mathcal M_{\times}=(S_1 \times S_2;\sim_1^{\mathcal M_{\times}},\sim_2^{\mathcal M_{\times}})$ of some sets $S_1$ and $S_2$.
\end{proposition}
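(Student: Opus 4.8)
The plan is to treat the two implications separately: the forward direction is a direct verification of axioms (1)--(3) for the Cartesian product model, while the converse is a construction of the product structure out of the equivalence classes of an abstract model.

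For the forward direction I would check each axiom in turn for $\mathcal{M}_{\times}$. Axiom (1) is immediate, since $\sim_k^{\mathcal{M}_{\times}}$ is the pullback of the equality relation on $S_k$ along $\pi_k$, and the preimage of an equivalence relation under a function is again an equivalence relation. Axiom (2) holds because an element of $S_1 \times S_2$ is determined by its two coordinates: if $\pi_1(x)=\pi_1(y)$ and $\pi_2(x)=\pi_2(y)$, then $x=y$. For axiom (3), given $x=(a_1,a_2)$ and $y=(b_1,b_2)$, the element $z=(a_1,b_2)$ satisfies $\pi_1(z)=\pi_1(x)$ and $\pi_2(z)=\pi_2(y)$, and the nonemptiness hypothesis on $S_1$ and $S_2$ guarantees that the underlying product is itself nonempty.

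For the converse, I would set $S_1 = S/\sim_1^{\mathcal{M}}$ and $S_2 = S/\sim_2^{\mathcal{M}}$, write $[x]_k$ for the $\sim_k^{\mathcal{M}}$-class of $x$, and define $\phi : S \to S_1 \times S_2$ by $\phi(x)=([x]_1,[x]_2)$. The claim is that $\phi$ is an isomorphism onto the Cartesian product model built from $S_1$ and $S_2$. Injectivity of $\phi$ is precisely axiom (2): the equality $\phi(x)=\phi(y)$ unwinds to $x \sim_1^{\mathcal{M}} y$ and $x \sim_2^{\mathcal{M}} y$, whence $x=y$. That $\phi$ is a homomorphism of $L_{\times}$-structures is essentially tautological, since $x \sim_k^{\mathcal{M}} y$ holds if and only if $[x]_k=[y]_k$, which is exactly $\pi_k(\phi(x))=\pi_k(\phi(y))$, i.e. $\phi(x) \sim_k^{\mathcal{M}_{\times}} \phi(y)$.

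The one step that genuinely uses the full strength of the axioms---and which I expect to be the crux---is surjectivity, where axiom (3) enters. Given an arbitrary element $(c_1,c_2) \in S_1 \times S_2$, I would choose representatives $a \in c_1$ and $b \in c_2$ and apply axiom (3) to $a$ and $b$ to obtain a witness $z \in S$ satisfying $a \sim_1^{\mathcal{M}} z$ and $b \sim_2^{\mathcal{M}} z$. Symmetry of the two equivalence relations then yields $[z]_1=[a]_1=c_1$ and $[z]_2=[b]_2=c_2$, so $\phi(z)=(c_1,c_2)$; since only the existence of some such $z$ is required, no question of independence from the chosen representatives arises. Combining injectivity, surjectivity, and the homomorphism property shows that $\phi$ is the desired isomorphism of $L_{\times}$-structures.
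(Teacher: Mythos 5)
Your proof is correct and takes essentially the same route as the paper: for the converse you construct exactly the paper's map $x \mapsto ([x]_1,[x]_2)$ into the product of the two quotients, with injectivity coming from axiom (2) and surjectivity from axiom (3), while the forward direction is the routine verification the paper dismisses as obvious. The only difference is that you spell out details (including the correct witness $z=(a_1,b_2)$ for axiom (3)) that the paper leaves implicit.
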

\begin{proof}
It is obvious that the tuple $\mathcal M_{\times}=(S_1 \times S_2;\sim_1^{\mathcal M_{\times}},\sim_2^{\mathcal M_{\times}})$ is a model of $T_{\times}$.

We demonstrate the converse.
Since the relation $\sim_k^{\mathcal M}$ is an equivalence relation by Definition \ref{def:cartesian}(1), the quotient space $S_k=S/\sim_k^{\mathcal M}$ is well-defined for $i=1,2$.
For any $x \in S$, let $[x]_k$ be the equivalence class of $x$ under the equivalence relation $\sim_k$.
The map $\sigma:S \rightarrow S_1 \times S_2$ is defined by $\sigma(x)=([x]_1,[x]_2)$.

The map $\sigma$ is an isomorphism.
The bijectivity of $\sigma$ follows from Definition \ref{def:cartesian}(2) and (3).
For any $x,y \in S$, the equivalence of the condition that $x \sim_k y$ with the condition that $\pi_k(\sigma(x))=\pi_k(\sigma(y))$ is also obvious from the definition of the map $\sigma$.
\end{proof}

The simple product of two structures are defined in \cite[Section 4]{KTTT}.
Let $L_1$ and $L_2$ be two languages, and $T_1$ and $T_2$ be their theories.
Our purpose is to construct a theory $T_{\text{sim}}$ such that any model of $T_{\text{sim}}$ is a simple product.
We first review the notion of simple products of two structures.

\begin{definition}\cite[Definition 14]{KTTT}
Let $L_1$ and $L_2$ be two languages.
Consider $L_k$-structure $\mathcal M_k$ and its universe $M_k$ for $k=1,2$.
Set $N=M_1 \times M_2$.
For subsets $A \subset M_1^n$ and $B \subset M_2^n$, we set 
$$A*B=\{((a_1,b_1),\ldots, (a_n,b_n)) \in N^n\;|\; (a_1, \ldots, a_n) \in A,\ (b_1, \ldots, b_n) \in B\}\text{.}$$

Consider another language $L$.
An $L$-structure $\mathcal N$ is called a \textit{simple product} of $\mathcal M_1$ and $\mathcal M_2$ if the universe $\mathcal N$ is the Cartesian product $N=M_1 \times M_2$ and any $N$-definable set in $N^n$ is a boolean combination of the following sets:
\begin{itemize}
\item $A_i*M_1^n\ (1 \leq i \leq k)$ and 
\item $M_1^n*B_j\ (1 \leq j \leq l)$,
\end{itemize}
where $A_i$ are $\mathcal M_1$-definable subsets of $M_1^n$ and $B_j$ are $\mathcal M_2$-definable subsets of $M_2^n$.
\end{definition}

In the original definition, a simple product of two structures is not uniquely determined.
Therefore, we define the standard simple product which is uniquely determined from the given structures.
To this end, we first define the language of simple product of two languages.

\begin{definition}[Language of simple product]\label{def:simple_lan}
Let $L_1$ and $L_2$ be two languages.
We assume that $L_1$ and $L_2$ do not contain function symbols.
When they have function symbols, we convert them to predicate symbols by replacing them with their graphs.
We consider that $L_1$ and $L_2$ have empty intersection.
It means that, even if a symbol is commonly used in both $L_1$ and $L_2$, we distinguish them.
We also assume that both languages contain at least one constant symbol.

We define the language $L_{\text{sim}}$.
For any pair of constant symbols $(c_1,c_2)$ with $c_1 \in L_1$ and $c_2 \in L_2$, there exists a unique constant symbol $C_{(c_1,c_2)}$ in $L_{\text{sim}}$.
The language $L_{\text{sim}}$ does not have other constant symbols.

The set of predicate symbols in $L_{\text{sim}}$ is the disjoint union of those in three languages $L_1$, $L_2$ and $L_{\times}$.
Recall that $L_{\times}$ is the language defined in Definition \ref{def:cartesian}.
The language $L_{\text{sim}}$ is called the \textit{language of simple product of $L_1$ and $L_2$}.
\end{definition}

We then give the definition of the standard simple product. 

\begin{definition}[Standard simple product]\label{def:standard_sp}
Let $L_1$ and $L_2$ be two languages which do not have a function symbol.
Consider $L_k$-structures $\mathcal M_k$ and their universes $M_k$ for $k=1,2$.
Set $N=M_1 \times M_2$.

We define an $L_{\text{sim}}$-structure $\mathcal N_{\text{st}}$ as follows.
The universe of $\mathcal N_{\text{st}}$ is the Cartesian product $N=M_1 \times M_2$.
Let $\pi_k:M_1 \times M_2 \rightarrow M_k$ be the projection onto the $k$-th coordinate for $k=1,2$.
The interpretation $C_{(c_1,c_2)}^{\mathcal N_{\text{st}}}$ of the constant symbol $C_{(c_1,c_2)}$ is $(c_1^{\mathcal M_1}, c_2^{\mathcal M_2}) \in N$, where $c_k^{\mathcal M_k}$ is the interpretation of $c_k$ in $\mathcal M_k$ for $k=1,2$.
For any $x,y \in N$, we define $\mathcal N_{\text{st}} \models (x \sim_k y)$ by the condition $\pi_k(x)=\pi_k(y)$ for $k=1,2$.

For any tuple $\overline{c}=(c_1, \ldots, c_n) \in N^n$, let $\pi_k(\overline{c})$ denote the tuple $(\pi_k(c_1), \ldots, \pi_k(c_n))$.
Let $R$ be an $n$-ary predicate symbol in $L_k$.
We set $\mathcal N_{\text{st}} \models R(\overline{c})$ by $\mathcal M_k \models R(\pi_k(\overline{c}))$.

The $L_{\text{sim}}$-structure $\mathcal N_{\text{st}}$ is called the \textit{standard simple product} of $\mathcal M_1$ and $\mathcal M_2$.
\end{definition}

\begin{definition}[Theory of simple product]\label{def:simple_theory}
Let $L_1$ and $L_2$ be two languages given in Definition \ref{def:simple_lan}.
Fix $k=1,2$.
Let $\phi(\overline{x})$ be an $L_k$-formula.
We construct an $L_{\text{sim}}$-formula $\widetilde{\phi}(\overline{x})$ from the formula $\phi(\overline{x})$ by induction on the complexity of $\phi(\overline{x})$.
The formula $\widetilde{\phi}(\overline{x})$ is called the \textit{standard conversion} of the formula $\phi(\overline{x})$.
\begin{enumerate}
\item[(a)] Let $x$ and $y$ be two $L_k$-terms.
Since $L_k$ does not have a function symbol, $x$ and $y$ are either constant symbols or variables.
Consider the case in which the formula $\phi(\overline{x})$ is of the form $x=y$.
\begin{itemize}
\item When both $x$ and $y$ are variables, the standard conversion $\widetilde{\phi}(x,y)$ is the formula $x \sim_k y$.
\item When $x$ is a variable and $y$ is a constant symbol and $k=1$, take an arbitrary constant symbol $c$ in $L_2$.
The standard conversion $\widetilde{\phi}(x)$ is the formula $x \sim_1 C_{y,c}$.
We define the standard conversion similarly in the other cases in which only one of $x$ and $y$ is a constant symbol.
\item When both $x$ and $y$ are constant symbols and $k=1$, take arbitrary constant symbols $c_1$ and $c_2$ in $L_2$, and $\widetilde{\phi}$ is the sentence of the form $(C_{x,c_1} \sim_1 C_{y,c_2})$.
The case in which $k=2$ is similar.
\end{itemize}
\item[(b)] Let $\overline{x}$ be a tuple of variables and $R$ be a predicate symbol in $L_k$.
When the formula $\phi$ is of the form $R(\overline{x})$, the standard conversion $\widetilde{\phi}$ is also the formula $R(\overline{x})$.
\item[(c)] When the formula $\phi(\overline{x})$ is the conjunction $\psi_1(\overline{x}) \wedge \psi_2(\overline{x})$ and $\widetilde{\psi_i}(\overline{x})$ are the standard conversions of $\psi_i(\overline{x})$ for all $i=1,2$.
The standard conversion $\widetilde{\phi}(\overline{x})$ of $\phi(\overline{x})$ is given by $\widetilde{\psi_1}(\overline{x}) \wedge \widetilde{\psi_2}(\overline{x})$.
The standard conversion of $\phi'(\overline{x})=\neg \psi_1(\overline{x})$ is given by $\widetilde{\phi'}(\overline{x})=\neg \widetilde{\psi_1}(\overline{x})$.
\item[(d)] When the formula $\phi(\overline{x})$ is of the form $(\exists y) \psi(\overline{x},y)$ and $\widetilde{\psi}(\overline{x},y)$ are the standard conversions of $\psi(\overline{x},y)$.
The standard conversion $\widetilde{\phi}(\overline{x})$ of $\phi(\overline{x})$ is given by $(\exists y) \widetilde{\psi}(\overline{x},y)$.
\item[(e)] Let $\overline{x}=(x_1,\ldots, x_n)$ be an $L_k$-terms and contains a constant symbol and $R$ be a predicate symbol in $L_k$.
The formula $R(\overline{x})$, it is equivalent to the formula
$$(\exists \overline{y}) \left(R(\overline{y}) \wedge \bigwedge_{i=1}^n (x_i=y_i)\right)\text{,}$$
where $\overline{y}=(y_1,\ldots,y_n)$ is a tuple of variables.
Apply (a) through (d) to the obtained formula.
The output is the standard conversion of $R(\overline{x})$.
\end{enumerate}

Let $T_k$ be theories of $L_k$ for $k=1,2$.
The set of $L_{\text{sim}}$-sentences $\widetilde{T_k}$ is given by 
$$\widetilde{T_k}=\{\widetilde{\sigma}\;|\; \sigma \in T_k\}\text{,}$$
where $\widetilde{\sigma}$ is the standard conversion of an $L_k$-sentence $\sigma$.
The set of $L_{\text{sim}}$-sentences $T_{\text{sim}}$ consists of the following sentences:
\begin{enumerate}
\item[(1)] Sentences in $T_{\times}$, $\widetilde{T_1}$ and $\widetilde{T_2}$;
\item[(2)] For all $k=1,2$ and all the predicate symbols $R$ in $L_k$, $(\forall \overline{x}) (\forall \overline{y})( (\overline{x} \sim_k \overline{y}) \rightarrow (R(\overline{x}) \leftrightarrow R(\overline{y})))$.
\end{enumerate}
The theory $T_{\text{sim}}$ is called the \textit{theory of simple products} of $T_1$ and $T_2$.
\end{definition}

Any model of $T_{\text{sim}}$ is isomorphic to the standard simple product of two models of $T_1$ and $T_2$.
We begin to prove this claim.

\begin{lemma}\label{lem:standard}
Let $L_1$ and $L_2$ be two disjoint languages which do not have a function symbol and contain at least one constant symbol.
Let $\mathcal M_k=(M_k,\ldots)$ be $L_k$-structures for $k=1,2$.
Consider an $L_{\text{sim}}$-structure $\mathcal N$ which is simultaneously a model of $T_{\times}$ and isomorphic to the Cartesian product of $M_1$ with $M_2$ under the isomorphism $\sigma$ as a model of $T_{\times}$.
Let $\pi_k:M_1 \times M_2 \rightarrow M_k$ be the projection onto the $k$-th component for $k=1,2$.
Assume further that the condition that $x \sim_k y$ is equivalent to the condition that $\pi_k(\sigma(x))=\pi_k(\sigma(y))$ and the sentences given in Definition \ref{def:simple_theory}(2) are all satisfied.

Fix $k=1,2$.
Let $\phi(\overline{x})$ be an $L_k$-formula and $\widetilde{\phi}(\overline{x})$ be its standard conversion.
We have $\mathcal M_k \models \phi(\pi_k(\overline{x}))$ if and only if $\mathcal N \models \widetilde{\phi}(\sigma(\overline{x}))$ for any $\overline{x} \in (M_1 \times M_2)^n$.
\end{lemma}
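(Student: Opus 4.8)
The plan is to prove the biconditional by induction on the complexity of the $L_k$-formula $\phi(\overline{x})$, following exactly the five clauses (a)--(e) that define the standard conversion $\widetilde{\phi}$. Since the correspondence $x \mapsto \sigma(x)$ identifies $\mathcal N$ with the Cartesian product $M_1 \times M_2$ under which $\sim_k$ corresponds to equality of $k$-th coordinates, the core of the argument is to check that each syntactic rule for building $\widetilde{\phi}$ is matched by the correct semantic behaviour on the product.

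First I would set up the base cases, which are the atomic formulas handled in (a), (b), and (e). For (b), a predicate formula $R(\overline{x})$ with $R \in L_k$ and $\overline{x}$ a tuple of variables: by Definition \ref{def:standard_sp} (or rather its analogue forced by the hypotheses here) we have $\mathcal N \models R(\sigma(\overline{x}))$ iff $\mathcal M_k \models R(\pi_k(\sigma(\overline{x})))$, and since $\pi_k \circ \sigma$ realizes the projection this is exactly $\mathcal M_k \models R(\pi_k(\overline{x}))$. For the equality case (a), I would unwind each sub-case: when both sides are variables, $\widetilde{\phi}$ is $x \sim_k y$, and by hypothesis $\mathcal N \models \sigma(x) \sim_k \sigma(y)$ iff $\pi_k(\sigma(x)) = \pi_k(\sigma(y))$, i.e. iff $\mathcal M_k \models \pi_k(x) = \pi_k(y)$; the cases involving constant symbols reduce similarly, using that the interpretation of $C_{(c_1,c_2)}$ projects to $c_1^{\mathcal M_1}$ and $c_2^{\mathcal M_2}$, so the auxiliary constant chosen in $L_{3-k}$ is irrelevant after projection. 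Case (e) is not really a new case: it rewrites an atomic $R(\overline{x})$ containing constants into an existential over variables with equalities, so its correctness follows from (a)--(d) once those are established.

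The inductive step covers (c) the conjunction and (d) the existential quantifier, together with negation which although not listed explicitly is handled because the standard conversion commutes with the Boolean connective (one may assume formulas are in a normal form using only $\wedge$, $\exists$ and $\neg$, or add the trivial negation clause). For conjunction this is immediate: $\widetilde{\psi_1 \wedge \psi_2} = \widetilde{\psi_1} \wedge \widetilde{\psi_2}$, and $\mathcal N \models (\widetilde{\psi_1} \wedge \widetilde{\psi_2})(\sigma(\overline{x}))$ iff both conjuncts hold, which by the induction hypothesis is iff $\mathcal M_k \models \psi_1(\pi_k(\overline{x}))$ and $\mathcal M_k \models \psi_2(\pi_k(\overline{x}))$, i.e.\ iff $\mathcal M_k \models (\psi_1 \wedge \psi_2)(\pi_k(\overline{x}))$.

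The genuinely delicate step is the existential quantifier in (d). Here $\widetilde{(\exists y)\psi}(\overline{x}) = (\exists y)\widetilde{\psi}(\overline{x},y)$, and I must verify that ranging the witness $y$ over $\mathcal N = M_1 \times M_2$ matches ranging $\pi_k(y)$ over $M_k$. The forward direction is easy: given a witness $b \in M_k$ with $\mathcal M_k \models \psi(\pi_k(\overline{x}), b)$, I use surjectivity of $\pi_k$ (guaranteed because $M_{3-k}$ is nonempty, as each language carries a constant) to lift $b$ to some $d \in N$ with $\pi_k(d) = b$, and then apply the induction hypothesis to $(\overline{x}, \sigma^{-1}(d))$. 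The subtle direction is the converse: a witness $d \in N$ for the $L_{\text{sim}}$-formula projects to a witness $\pi_k(d) \in M_k$; the key point here is precisely the invariance axioms of Definition \ref{def:simple_theory}(2), which ensure that the truth value of $\widetilde{\psi}$ depends only on the $\sim_k$-class, i.e.\ only on the projection $\pi_k(d)$, so that the specific lift does not matter. I expect verifying this projection-invariance cleanly—and confirming that it propagates through the induction so that every $\widetilde{\psi}$ really is $\sim_k$-invariant—to be the main obstacle, and I would record it as an auxiliary claim (provable by the same induction) that $\mathcal N \models \widetilde{\psi}(\sigma(\overline{x}))$ depends only on $\pi_k(\overline{x})$.
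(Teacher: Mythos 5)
Your proposal is correct and follows essentially the same route as the paper: the paper's proof is precisely an induction on the complexity of $\phi(\overline{x})$ tracking clauses (a)--(e) of Definition \ref{def:simple_theory}, which it dismisses as ``almost immediate,'' while you supply the details it omits (the witness-lifting via surjectivity of $\pi_k$ in the existential step, the $\sim_k$-invariance claim resting on Definition \ref{def:simple_theory}(2), and the missing negation clause). No gap; yours is simply the fleshed-out version of the paper's one-line argument.
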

\begin{proof}
We prove it by induction on the complexity of the formula $\phi(\overline{x})$.
The lemma is almost immediate from Definition \ref{def:simple_theory}(a) through (d).
\end{proof}

\begin{theorem}\label{thm:standard_sp}
Let $L_1$ and $L_2$ be two disjoint languages which do not have a function symbol and contain at least one constant symbol.
Let $T_1$ and $T_2$ be their theories.
Consider models $\mathcal M_k$ of $T_k$ for $k=1,2$.
The standard simple product of $\mathcal M_1$ and $\mathcal M_2$ is a model of the theory $T_{\text{sim}}$ of simple products of $T_1$ and $T_2$.

Conversely, any model of $T_{\text{sim}}$ is isomorphic to the standard simple product of two models of $T_1$ and $T_2$.
\end{theorem}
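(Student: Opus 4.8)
The plan is to prove the two directions separately, both reducing to Lemma \ref{lem:standard}, which transfers satisfaction between $\mathcal{M}_k$ and an ambient Cartesian-product model. For the first assertion I would check the three groups of axioms of $T_{\text{sim}}$ (Definition \ref{def:simple_theory}) in the standard simple product $\mathcal{N}_{\text{st}}$ of $\mathcal{M}_1$ and $\mathcal{M}_2$. Since the universe of $\mathcal{N}_{\text{st}}$ is $M_1\times M_2$ with $\sim_k$ interpreted as equality of the $k$-th projection, Proposition \ref{prop:cartesian} gives $\mathcal{N}_{\text{st}}\models T_{\times}$ at once. The sentences of Definition \ref{def:simple_theory}(2) hold because $\mathcal{N}_{\text{st}}\models R(\overline{c})$ is defined through $\mathcal{M}_k\models R(\pi_k(\overline{c}))$ and so depends only on $\pi_k(\overline{c})$, whence $R$ respects $\sim_k$. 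To obtain $\mathcal{N}_{\text{st}}\models\widetilde{T_k}$ I would invoke Lemma \ref{lem:standard} with the identity map as $\sigma$: its hypotheses are precisely the facts just verified, so $\mathcal{M}_k\models\theta$ yields $\mathcal{N}_{\text{st}}\models\widetilde{\theta}$ for every $\theta\in T_k$.

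For the converse, let $\mathcal{N}$ be a model of $T_{\text{sim}}$. As $\mathcal{N}\models T_{\times}$, Proposition \ref{prop:cartesian} supplies quotient sets $M_k=N/\!\sim_k$ and the $T_{\times}$-isomorphism $\sigma(x)=([x]_1,[x]_2)$ onto $M_1\times M_2$. I would endow each $M_k$ with an $L_k$-structure by setting $\mathcal{M}_k\models R([a_1]_k,\ldots,[a_n]_k)$ exactly when $\mathcal{N}\models R(a_1,\ldots,a_n)$ for representatives $a_i$, and by interpreting a constant $c_k\in L_k$ as $[C_{(c_1,c_2)}^{\mathcal{N}}]_k$ for any pair constant $C_{(c_1,c_2)}$ whose $k$-th component is $c_k$. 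By construction $\sigma$ respects every $L_1$- and $L_2$-predicate, and it respects the pair constants once these interpretations are seen to be unambiguous, so $\sigma$ becomes an $L_{\text{sim}}$-isomorphism onto the standard simple product $\mathcal{N}_{\text{st}}(\mathcal{M}_1,\mathcal{M}_2)$ of Definition \ref{def:standard_sp}. To finish I would check $\mathcal{M}_k\models T_k$: for $\theta\in T_k$ we have $\mathcal{N}\models\widetilde{\theta}$ since $\widetilde{T_k}\subseteq T_{\text{sim}}$, and Lemma \ref{lem:standard}, whose hypotheses hold by the construction, turns this into $\mathcal{M}_k\models\theta$.

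The main obstacle is the well-definedness of these reduct structures. For predicates it is exactly Definition \ref{def:simple_theory}(2): if $a_i\sim_k a_i'$ for all $i$ then $\overline{a}\sim_k\overline{a'}$, so $\mathcal{N}\models R(\overline{a})\leftrightarrow R(\overline{a'})$ and the truth value is independent of the representatives. The subtler point is the constants: interpreting $c_k$ by $[C_{(c_1,c_2)}^{\mathcal{N}}]_k$ is legitimate only if $C_{(c_1,c_2)}^{\mathcal{N}}\sim_1 C_{(c_1,c_2')}^{\mathcal{N}}$ and $C_{(c_1,c_2)}^{\mathcal{N}}\sim_2 C_{(c_1',c_2)}^{\mathcal{N}}$, that is, the first projection must not depend on the second coordinate of the pair and vice versa. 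This is exactly what makes $\sigma$ send $C_{(c_1,c_2)}^{\mathcal{N}}$ to $(c_1^{\mathcal{M}_1},c_2^{\mathcal{M}_2})$ uniformly over all pairs, and I expect to secure it from the standard conversions of the reflexivity sentences $c_1=c_1$ and $c_2=c_2$, which by Definition \ref{def:simple_theory}(a) are precisely these $\sim_k$-equivalences; I would therefore make the availability of such conversions in $\widetilde{T_k}$ explicit at this point.
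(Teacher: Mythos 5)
Your proposal follows essentially the same route as the paper's proof: in the forward direction you verify $T_{\times}$, the compatibility sentences of Definition \ref{def:simple_theory}(2), and $\widetilde{T_k}$ via Lemma \ref{lem:standard}; in the converse you form the quotients $M_k=N/\!\sim_k$, use Definition \ref{def:simple_theory}(2) for well-definedness of the predicates, let $\sigma$ from Proposition \ref{prop:cartesian} carry the isomorphism, and transfer $T_k$ back through Lemma \ref{lem:standard}. The one place where you go beyond the paper is your third paragraph on constant symbols, and your instinct there is sound: the paper's converse argument defines $\mathcal M_k$ only on predicates and never checks that $\sigma$ sends $C_{(c_1,c_2)}^{\mathcal N}$ to $(c_1^{\mathcal M_1},c_2^{\mathcal M_2})$, which is exactly the point you isolate. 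This is not automatic. Every standard simple product satisfies $C_{(c_1,c_2)} \sim_1 C_{(c_1,c_2')}$ and $C_{(c_1,c_2)} \sim_2 C_{(c_1',c_2)}$, but such sentences lie in $T_{\text{sim}}$ only insofar as they arise as standard conversions of sentences of $T_k$ (for instance of the reflexivity sentences $c_k=c_k$), and only if Definition \ref{def:simple_theory}(a) is read as contributing the conversions for all choices of the auxiliary constants. If, say, $L_1=\{c\}$, $L_2=\{d,d'\}$ and $T_1,T_2$ are empty, one may interpret $C_{(c,d)}$ and $C_{(c,d')}$ as $\sim_1$-inequivalent elements of a model of $T_{\times}$ and obtain a model of $T_{\text{sim}}$ as literally defined that is isomorphic to no standard simple product; so the theorem needs either the deductive closure of the $T_k$ together with the all-choices reading of the conversion, or the explicit addition of these compatibility sentences to $T_{\text{sim}}$. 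Your closing remark that the availability of these conversions in $\widetilde{T_k}$ should be made explicit is therefore not a pedantic aside but a repair of a step the published proof passes over in silence; with that hypothesis secured, your argument is complete and otherwise coincides with the paper's.
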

\begin{proof}
We first demonstrate that the simple product $\mathcal N=(M_1 \times M_2,\ldots)$ of $\mathcal M_1=(M_1, \ldots)$ and $\mathcal M_2=(M_2, \ldots)$ is a model of $T_{\text{sim}}$.
It is obvious that the sentences in Definition \ref{def:simple_theory}(2) are satisfied by the definition of standard simple product.
Fix $k=1,2$.
The sentences in $\widetilde{T_k}$ are all satisfied in $\mathcal N$ by Lemma \ref{lem:standard} because $\mathcal M_k$ are models of $T_k$.

We next show the converse part.
Fix a model $\mathcal N=(N,\ldots)$ of $T_{\text{sim}}$.
Set $M_k=N/\sim_k^{\mathcal N}$ and $\sigma: N \rightarrow M_1 \times M_2$ be the $L_{\times}$-isomorphism defined in the proof of Proposition \ref{prop:cartesian}.
We define $L_k$ structures $\mathcal M_k$ as follows for $k=1,2$.
The universe of $\mathcal M_k$ is the set $M_k$.
For any predicate $R$ in $L_k$ we define $\mathcal M_k \models R(\overline{x})$ for $\overline{x} \in M_k^n$ by the condition that $\mathcal N \models R(\overline{y})$, where $\overline{y}$ is a representative in $N$ of the residue class $\overline{x}$.
The definition does not depend on the choice of representatives by Definition \ref{def:simple_theory}(2).
Let $\pi_k:M_1 \times M_2 \rightarrow M_k$ be the projection onto the $k$-th component for $k=1,2$.
It is obvious that 
$$x \sim_k y \Leftrightarrow \pi_k(\sigma(x))=\pi_k(\sigma(y))$$
for any $x,y \in N$.
Therefore, the map $\sigma$ induces the isomorphism between the model $\mathcal N$ and the standard simple product of $\mathcal M_1$ and $\mathcal M_2$.

The remaining task is to demonstrate that $\mathcal M_k$ are models of $T_k$ for $k=1,2$.
It immediately follows from Lemma \ref{lem:standard} and Definition \ref{def:simple_theory}(1).
\end{proof}

The next task is to demonstrate the standard simple product is truly a simple product.
\begin{theorem}\label{thm:standard_sp2}
Let $L_1$ and $L_2$ be two disjoint languages which do not have a function symbol and contain at least one constant symbol.
Let $T_1$ and $T_2$ be their theories.
Let $T_{\text{sim}}$ be the theory of simple products of $T_1$ and $T_2$.
For any $L_{\text{sim}}$-formula $\phi(\overline{x})$, there are finitely many $L_1$-formulas $\psi_{11}(\overline{x}), \ldots \psi_{1N}(\overline{x})$ and $L_2$-formulas $\psi_{21}(\overline{x}), \ldots \psi_{2N}(\overline{x})$ such that
\[
T_{\text{sim}} \models (\forall \overline{x}) \left( \phi(\overline{x}) \leftrightarrow \bigvee_{i=1}^N \widetilde{\psi}_{1i}(\overline{x}) \wedge \widetilde{\psi}_{2i}(\overline{x}) \right)\text{,}
\] 
where $\widetilde{\psi}_{ij}(\overline{x})$ is the standard conversion of $\psi_{ij}(\overline{x})$ for $i=1,2$ and $1 \leq j \leq N$.
\end{theorem}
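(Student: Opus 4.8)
The plan is to induct on the complexity of the $L_{\text{sim}}$-formula $\phi(\overline{x})$, showing that the class of formulas admitting such a decomposition --- call them \emph{separable} --- contains the atomic formulas and is closed under negation, conjunction, disjunction and existential quantification. Throughout I would use two facts that follow at once from Lemma \ref{lem:standard} together with Theorem \ref{thm:standard_sp}, which guarantees that every model of $T_{\text{sim}}$ is a standard simple product. First, the standard conversion commutes with the logical operations: $\widetilde{\psi\wedge\chi}=\widetilde\psi\wedge\widetilde\chi$ and $\widetilde{\exists y\,\psi}=\exists y\,\widetilde\psi$ by Definition \ref{def:simple_theory}(c),(d), and $T_{\text{sim}}\models\forall\overline{x}(\neg\widetilde\psi(\overline{x})\leftrightarrow\widetilde{\neg\psi}(\overline{x}))$ because in a standard simple product $\widetilde\psi(\overline{x})$ holds exactly when $\psi(\pi_k(\overline{x}))$ does. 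Second, the conversion of an $L_k$-formula is $\sim_k$-invariant: $T_{\text{sim}}\models\forall\overline{x}\,\forall\overline{x}'\big((\overline{x}\sim_k\overline{x}')\rightarrow(\widetilde\psi(\overline{x})\leftrightarrow\widetilde\psi(\overline{x}'))\big)$, since $\overline{x}\sim_k\overline{x}'$ means $\pi_k(\overline{x})=\pi_k(\overline{x}')$ and $\widetilde\psi$ reads only the $k$-th coordinate.

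For the base case I would run through the atomic $L_{\text{sim}}$-formulas. Those of the form $R(\overline{t})$ with $R$ a predicate of $L_k$, and those of the form $t\sim_k t'$, are each $T_{\text{sim}}$-equivalent to the standard conversion of a single $L_k$-formula; the only point needing care is the treatment of the $L_{\text{sim}}$-constants occurring among the terms $\overline{t}$, which is handled by clause (e) of Definition \ref{def:simple_theory} together with the observation that any two constants $C_{(c_1,c_2)}$ and $C_{(c_1,c_2')}$ sharing their first entry satisfy $T_{\text{sim}}\models C_{(c_1,c_2)}\sim_1 C_{(c_1,c_2')}$ (again by Theorem \ref{thm:standard_sp}). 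An equality $t=t'$ is, by the axiom $T_{\times}$(2), equivalent to $(t\sim_1 t')\wedge(t\sim_2 t')$, hence to a conjunction of two conversions. Thus every atomic formula is separable, where I take the missing $L_2$- (resp. $L_1$-) factor to be the conversion of a trivially true sentence built from the constant guaranteed by hypothesis.

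The closure under disjunction is immediate, and closure under conjunction follows by distributing and using $\widetilde\psi_{1i}\wedge\widetilde\chi_{1j}=\widetilde{\psi_{1i}\wedge\chi_{1j}}$ (Definition \ref{def:simple_theory}(c)) to recombine the $L_1$- and $L_2$-parts. For negation I apply De Morgan to $\neg\bigvee_i(\widetilde\psi_{1i}\wedge\widetilde\psi_{2i})$ and use $\neg\widetilde\psi_{ki}\equiv\widetilde{\neg\psi_{ki}}$ to see that each $\neg\widetilde\psi_{ki}$ is separable; closure under the already-established $\wedge$ and $\vee$ then finishes this case.

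The existential step is where the real work lies, and I expect it to be the main obstacle. Writing a separable formula as $\bigvee_i(\widetilde\psi_{1i}(\overline{x},y)\wedge\widetilde\psi_{2i}(\overline{x},y))$ and pushing $\exists y$ inside the disjunction, the problem reduces to a single conjunction $\exists y\,(\widetilde\psi_1(\overline{x},y)\wedge\widetilde\psi_2(\overline{x},y))$, where the bound variable $y$ couples the two coordinates. The key claim is that this is $T_{\text{sim}}$-equivalent to $(\exists y\,\widetilde\psi_1(\overline{x},y))\wedge(\exists y\,\widetilde\psi_2(\overline{x},y))$. The forward implication is trivial. For the converse, given witnesses $y_1$ for $\widetilde\psi_1$ and $y_2$ for $\widetilde\psi_2$, the axiom $T_{\times}$(3) supplies an element $z$ with $z\sim_1 y_1$ and $z\sim_2 y_2$; the $\sim_k$-invariance of conversions then yields $\widetilde\psi_1(\overline{x},z)$ and $\widetilde\psi_2(\overline{x},z)$, so $z$ is a common witness. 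Finally $\exists y\,\widetilde\psi_k(\overline{x},y)=\widetilde{\exists y\,\psi_k}(\overline{x})$ by Definition \ref{def:simple_theory}(d), so the result is again a single $\widetilde\alpha\wedge\widetilde\beta$ with $\alpha$ an $L_1$- and $\beta$ an $L_2$-formula, completing the induction.
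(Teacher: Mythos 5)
Your proposal is correct and follows essentially the same route as the paper's own proof: induction on the complexity of $\phi$, with the same treatment of atomic formulas via $T_{\times}$(1),(2), and the same key existential step, using axiom $T_{\times}$(3) to merge the separate witnesses $y_1,y_2$ into a common witness via the $\sim_k$-invariance of standard conversions (Lemma \ref{lem:standard}). Your explicit handling of negation through $\neg\widetilde{\psi}\equiv\widetilde{\neg\psi}$ covers a case the paper's induction passes over silently, but this is a matter of care rather than a different method.
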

\begin{proof}
We first introduce several terms.
An $L_{\text{sim}}$-formula is called \textit{simple} if it is of the form $\widetilde{\psi}_{1}(\overline{x}) \wedge \widetilde{\psi}_{2}(\overline{x})$, where $\widetilde{\psi}_{k}(\overline{x})$ are the standard conversions of $L_k$-formulas $\psi_k$ for all $k=1,2$.
An $L_{\text{sim}}$-formula is called \textit{semi-simple} if it is the disjunction of finitely many simple formulas.
The theorem says that any $L_{\text{sim}}$-formula is equivalent to a semi-simple formula.
We prove the theorem by induction on the complexity of the $L_{\text{sim}}$-formula $\phi(\overline{x})$.

We first consider the case in which the formula $\phi(\overline{x})$ is an atomic formula of the form $x=y$, where $x$ and $y$ are $L_{\text{sim}}$ terms.
We have $$T_{\text{sim}} \models (\forall x) (\forall y) ((x=y) \leftrightarrow ((x \sim_1 y) \wedge (x \sim_2 y))$$ by Definition \ref{def:cartesian}(1) and (2).
The formula $x \sim_k y$ is the standard conversion of the $L_k$-formula for $k=1,2$.
In fact, when both $x$ and $y$ are variables, $x \sim_k y$ is the standard conversion of the formula $x=y$.
When only one of $x$ and $y$, say $x$, is a variable, we have $y=C_{c_1,c_2}$ for some constant symbol.
The formula $x \sim_k y$ is the standard conversion of the formula $x=c_k$.
Finally, when both $x$ and $y$ are constant symbols, we have $x=C_{c_1,c_2}$ and $y=C_{d_1,d_2}$, where $c_k$ and $d_k$ are in $L_k$ for $k=1,2$.
The formula $x \sim_k y$ is the standard conversion of the formula $c_k=d_k$.
Consequently, the formula $\phi(x,y)$ is equivalent to a simple formula.

We next consider the case in which $\phi(\overline{x})$ is of the form $R(\overline{x})$, where $R$ is a predicate symbol in $L_1$ or $L_2$ and $\overline{x}$ is a tuple of $L_{\text{sim}}$-terms.
In the same manner as Definition \ref{def:simple_theory}(e), we can reduce to the case in which $\overline{x}$ is a tuple of variables.
The $R(\overline{x})$ is simple by the definition.

The conjunction of two semi-simple formulas and the negation of a semi-simple formula are obviously semi-simple by Definition \ref{def:simple_theory}(c).

We finally consider the case in which $\phi(\overline{x})$ is of the form $(\exists y) \psi(\overline{x},y)$, where $\psi(\overline{x},y)$ is a semi-simple formula.
We may assume that $\psi$ is simple without loss of generality.
There exist an $L_1$-formula $\psi_1(\overline{x},y)$ and an $L_2$-formula $\psi_2(\overline{x},y)$ such that $\psi(\overline{x},y) = \widetilde{\psi_1}(\overline{x},y) \wedge \widetilde{\psi_2}(\overline{x},y)$.
Here, $\widetilde{\psi}_k(\overline{x},y)$ denotes the standard conversion of the $L_k$-formula $\psi_k(\overline{x},y)$ for $k=1,2$.
We consider the formula $\phi'(\overline{x})$ given by 
$$((\exists y_1) \widetilde{\psi_1}(\overline{x},y_1)) \wedge ((\exists y_2) \widetilde{\psi_2}(\overline{x},y_2)) \text{.}$$
The formula $((\exists y_k) \widetilde{\psi_k}(\overline{x},y_k))$ is the standard conversion of the $L_k$ formula $$((\exists y_k) \psi_k(\overline{x},y_k))$$ by Definition \ref{def:simple_theory}(d) for any $k=1,2$.
The formula $\phi'(\overline{x})$ is simple.
Therefore, we have only to show that the formula $\phi(\overline{x})$ is equivalent to the formula $\phi'(\overline{x})$.
Obviously, the formula $\phi(\overline{x})$ implies the formula $\phi'(\overline{x})$.
We demonstrate the opposite implication.

Take an arbitrary model $\mathcal M=(M,\ldots)$ of $T_{\text{sim}}$.
Assume that $\overline{c} \in M^n$ satisfies the formula $\phi'(\overline{x})$.
We can take $d_k \in M$ such that $\mathcal M \models \widetilde{\psi_k}(\overline{c},d_k)$ for $k=1,2$.
We can take $d \in M$ with $d \sim_1 d_1$ and $d \sim_2 d_2$ by Definition \ref{def:cartesian}(3).
We have $\mathcal M \models \widetilde{\psi_k}(\overline{c},d)$ for $k=1,2$ by Lemma \ref{lem:standard}.
It means that $\mathcal M \models \phi(\overline{c})$.
We have finished the proof.
\end{proof}

\begin{corollary}\label{cor:standard_sp}
Let $L_1$ and $L_2$ be two disjoint languages which do not have a function symbol.
Consider $L_k$-structures $\mathcal M_k$ for $k=1,2$.
Their standard simple product $\mathcal N_{\text{st}}$ is a simple product.
\end{corollary}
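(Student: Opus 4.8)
The plan is to read the conclusion off directly from the normal-form result Theorem \ref{thm:standard_sp2}, the only real work being the bookkeeping that turns a semi-simple formula into a boolean combination of the two kinds of cylinders appearing in the definition of simple product. First I would fix an arbitrary $\mathcal N_{\text{st}}$-definable subset $D$ of $N^n$, where $N=M_1\times M_2$ is the universe of $\mathcal N_{\text{st}}$. Since parameters are allowed, $D$ is defined by $\phi(\overline{x},\overline{a})$ for some $L_{\text{sim}}$-formula $\phi(\overline{x},\overline{y})$ and some tuple $\overline{a}\in N^m$. (If either $L_k$ happens to contain no constant symbol I would first adjoin a dummy one, which changes neither the class of definable sets nor the standard simple product, so that the hypotheses of Theorem \ref{thm:standard_sp2} are met.) By Theorem \ref{thm:standard_sp} the structure $\mathcal N_{\text{st}}$ is a model of $T_{\text{sim}}$, so Theorem \ref{thm:standard_sp2} applies to $\phi(\overline{x},\overline{y})$ and yields $L_1$-formulas $\psi_{1i}$ and $L_2$-formulas $\psi_{2i}$ with
$$\mathcal N_{\text{st}} \models (\forall \overline{x})(\forall \overline{y})\left( \phi(\overline{x},\overline{y}) \leftrightarrow \bigvee_{i=1}^N \widetilde{\psi}_{1i}(\overline{x},\overline{y}) \wedge \widetilde{\psi}_{2i}(\overline{x},\overline{y}) \right)\text{.}$$
Substituting $\overline{y}=\overline{a}$ then exhibits $D$ as the union over $i$ of the sets defined by $\widetilde{\psi}_{1i}(\overline{x},\overline{a}) \wedge \widetilde{\psi}_{2i}(\overline{x},\overline{a})$.

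The next step is to identify each conjunct as a cylinder. For the standard simple product the isomorphism of Theorem \ref{thm:standard_sp} may be taken to be the identity and $\pi_k$ the genuine projections, so Lemma \ref{lem:standard} gives that $\mathcal N_{\text{st}} \models \widetilde{\psi}_{1i}(\overline{c},\overline{a})$ holds exactly when $\mathcal M_1 \models \psi_{1i}(\pi_1(\overline{c}),\pi_1(\overline{a}))$. Hence the set defined by $\widetilde{\psi}_{1i}(\overline{x},\overline{a})$ equals $A_i * M_2^n$, where $A_i=\{\overline{u}\in M_1^n \;|\; \mathcal M_1 \models \psi_{1i}(\overline{u},\pi_1(\overline{a}))\}$ is an $M_1$-definable subset of $M_1^n$ (with parameters $\pi_1(\overline{a})$). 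Symmetrically, the set defined by $\widetilde{\psi}_{2i}(\overline{x},\overline{a})$ equals $M_1^n * B_i$ with $B_i=\{\overline{v}\in M_2^n \;|\; \mathcal M_2 \models \psi_{2i}(\overline{v},\pi_2(\overline{a}))\}$ an $M_2$-definable subset of $M_2^n$.

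Combining the two steps,
$$D = \bigcup_{i=1}^N \left( (A_i * M_2^n) \cap (M_1^n * B_i) \right)\text{,}$$
which is a boolean combination of sets of the form $A * M_2^n$ and $M_1^n * B$ with $A$ an $M_1$-definable subset of $M_1^n$ and $B$ an $M_2$-definable subset of $M_2^n$. This is precisely the condition in the definition of simple product, so $\mathcal N_{\text{st}}$ is a simple product.

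As for the main obstacle, there is essentially no deep difficulty left once Theorem \ref{thm:standard_sp2} is in hand; the corollary is a translation exercise. The one point demanding care is the handling of parameters: I must carry the parameter tuple $\overline{a}$ through the normal form as extra free variables and then verify, via Lemma \ref{lem:standard}, that the standard conversion of an $L_k$-formula with parameters from $N$ really cuts out a cylinder over an $\mathcal M_k$-definable set with the projected parameters $\pi_k(\overline{a})$. Keeping the two factors and their respective parameters from getting entangled is the only place where a careless argument could go wrong.
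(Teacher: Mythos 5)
Your proposal is correct and matches the paper's approach exactly: the paper's proof of this corollary is simply ``Immediate from Lemma \ref{lem:standard}, Theorem \ref{thm:standard_sp} and Theorem \ref{thm:standard_sp2}'', and you have spelled out precisely that deduction, with the right care about carrying parameters through the normal form and projecting them into each factor. (Incidentally, your writing of the first cylinder as $A_i * M_2^n$ silently corrects a typo in the paper's quoted definition, which prints $A_i * M_1^n$.)
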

\begin{proof}
Immediate from Lemma \ref{lem:standard}, Theorem \ref{thm:standard_sp} and Theorem \ref{thm:standard_sp2}.
\end{proof}

\section{NIP and distal theories}\label{sec:nip}
We now demonstrate that the theory of simple products of the theories $T_1$ and $T_2$ is NIP if and only if both $T_1$ and $T_2$ are NIP.
We first review the definition of NIP theory.

\begin{definition}\cite{S}
Let $T$ be a complete theory with infinite model and $L$ be its language.
An $L$-formula $\phi(\overline{x};\overline{y})$ has IP (the independence property) if
 there are $\{\overline{a}_i\;|\;i < \omega\}$ and $\{\overline{b}_I\;|\; I \subset \omega\}$ such that
$$\models \phi(\overline{a}_i;\overline{b}_I) \Leftrightarrow i \in I \text{.}$$
By compactness theorem, $\phi(\overline{x};\overline{y})$ has IP if and only if there are $\{\overline{a}_i\;|\;i < \omega\}$ and $\{\overline{b}_I\;|\; I \subset \omega\text{: finite}\}$ such that
$$\models \phi(\overline{a}_i;\overline{b}_I) \Leftrightarrow i \in I \text{.}$$
If $\phi$ does not have IP, we say it has NIP.
T has IP if some formula has IP in T, and otherwise it is said that $T$ has NIP.
\end{definition}

We now get the following theorem:
\begin{theorem}\label{thm:inherit}
Let $L_1$ and $L_2$ be two disjoint languages which do not have a function symbol and contains at least one constant symbol.
Let $T_1$ and $T_2$ be their theories.
Let $T_{\text{sim}}$ be the theory of simple product of $T_1$ and $T_2$.
The theory $T_{\text{sim}}$ is NIP if and only if both the theories $T_1$ and $T_2$ are NIP.
\end{theorem}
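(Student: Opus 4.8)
The plan is to combine the structural reduction of Theorem~\ref{thm:standard_sp2} with Lemma~\ref{lem:standard} and the standard fact that, for a fixed partition of the variables, the family of NIP formulas is closed under Boolean combinations (see, e.g., \cite{S}). Throughout I argue at the level of formulas: a theory has IP exactly when some formula, under some partition $(\overline{x};\overline{y})$ of its free variables, has IP. I also use Theorem~\ref{thm:standard_sp}, which guarantees that every model of $T_{\text{sim}}$ is the standard simple product of a model $\mathcal M_1 \models T_1$ and a model $\mathcal M_2 \models T_2$; this lets me pass freely between $T_{\text{sim}}$ and its two factors.

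For the easy direction I prove the contrapositive: if some $T_k$ has IP then so does $T_{\text{sim}}$. Say $k=1$ and let $\psi(\overline{x};\overline{y})$ be an $L_1$-formula witnessing IP in a model $\mathcal M_1 \models T_1$, with witnessing tuples $\overline{a}_j$ ($j<\omega$) and $\overline{b}_I$ ($I \subseteq \omega$ finite). Fix any $\mathcal M_2 \models T_2$ and form the standard simple product $\mathcal N$. Lift each $\overline{a}_j$ and each $\overline{b}_I$ to tuples $\widehat{a}_j,\widehat{b}_I$ over $N=M_1\times M_2$ whose first projections are $\overline{a}_j,\overline{b}_I$ (the second coordinates being arbitrary). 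Since the standard conversion preserves free variables, Lemma~\ref{lem:standard} gives
\[
\mathcal N \models \widetilde{\psi}(\widehat{a}_j;\widehat{b}_I) \iff \mathcal M_1 \models \psi(\overline{a}_j;\overline{b}_I) \iff j \in I\text{,}
\]
so $\widetilde{\psi}$ has IP in $\mathcal N$, hence $T_{\text{sim}}$ has IP. The case $k=2$ is symmetric.

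For the main direction, assume both $T_1$ and $T_2$ are NIP and suppose toward a contradiction that some $L_{\text{sim}}$-formula $\phi(\overline{x};\overline{y})$ has IP in $T_{\text{sim}}$. Writing $\overline{z}=(\overline{x},\overline{y})$ and applying Theorem~\ref{thm:standard_sp2}, we may replace $\phi(\overline{z})$ by an equivalent semi-simple formula $\bigvee_{i=1}^N \widetilde{\psi}_{1i}(\overline{z}) \wedge \widetilde{\psi}_{2i}(\overline{z})$, the equivalence holding in every model of $T_{\text{sim}}$ and in particular wherever IP is witnessed. I claim each factor $\widetilde{\psi}_{ki}(\overline{x};\overline{y})$ is NIP in $T_{\text{sim}}$. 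Indeed, if $\widetilde{\psi}_{1i}$ had IP, take a model of $T_{\text{sim}}$ with witnessing tuples; by Theorem~\ref{thm:standard_sp} this model is the standard simple product of some $\mathcal M_1 \models T_1$ and $\mathcal M_2 \models T_2$, and applying $\pi_1$ to the witnesses together with Lemma~\ref{lem:standard} produces witnesses of IP for the $L_1$-formula $\psi_{1i}$ in $\mathcal M_1$, contradicting NIP of $T_1$; the argument for $\widetilde{\psi}_{2i}$ uses $\pi_2$ and $T_2$. Since $\phi$ is a Boolean combination, under the fixed partition $(\overline{x};\overline{y})$, of the NIP formulas $\widetilde{\psi}_{ki}$, it is itself NIP, contradicting the choice of $\phi$. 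Therefore $T_{\text{sim}}$ is NIP.

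The only delicate points are bookkeeping rather than conceptual: one must check that splitting $\overline{z}$ back into $(\overline{x};\overline{y})$ is compatible with the standard conversion, which it is because each clause of Definition~\ref{def:simple_theory} leaves the free variables unchanged, and one must invoke the closure of NIP under Boolean combinations for one and the same partition. The genuine content is entirely carried by Theorem~\ref{thm:standard_sp2} and Lemma~\ref{lem:standard}; once these are in hand the independence property transfers transparently between $T_{\text{sim}}$ and its two factors.
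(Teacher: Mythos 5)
Your proposal is correct and follows essentially the same route as the paper: the easy direction lifts IP witnesses into a standard simple product via Lemma~\ref{lem:standard}, and the hard direction combines the semi-simple decomposition of Theorem~\ref{thm:standard_sp2} with closure of NIP under Boolean combinations of formulas (\cite[Lemma 2.9]{S} in the paper) and projection of witnesses through Theorem~\ref{thm:standard_sp} and Lemma~\ref{lem:standard}. The only differences are presentational: the paper argues the hard direction contrapositively rather than by contradiction, and it explicitly checks that the shattering pattern forces the projection $\pi_1$ to be injective on the witness set --- a point you leave implicit, but which is automatic by the very argument the paper records.
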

\begin{proof}
Assume that one of $T_1$ and $T_2$ is not NIP.
We assume that $T_1$ is not NIP without loss of generality.
Let $\phi(\overline{x};\overline{y})$ be an $L_1$-formula having IP.
There exists a model $\mathcal M_1=(M_1,\ldots)$ of $T_1$, an infinite set $A$ of $|\overline{x}|$-tuples and a family $\{\overline{b_I}\in M_1^{|\overline{y}|}\;|\;I \subset A\}$ such that
$$\mathcal M_1 \models \phi(\overline{a};\overline{b_I}) \Leftrightarrow \overline{a} \in I\ \ \text{ for all }\overline{a} \in A \text{.}$$
Take an arbitrary model $\mathcal M_2=(M_2,\ldots)$ of $T_2$ and consider the standard simple product $\mathcal N_{\text{st}}$ of $\mathcal M_1$ and $\mathcal M_2$.
Let $\widetilde{\phi}(\overline{x};\overline{y})$ be the standard conversion of $\phi(\overline{x};\overline{y})$.
Take a point $c \in M_2$.
For any tuple $\overline{d}=(d_1, \ldots d_n) \in M_1^n$, the notation $\widehat{d}$ denotes the tuple $((d_1,c),\ldots,(d_n,c)) \in (M_1 \times M_2)^n$.
Set $A'=\{\widehat{a}\;|\; \overline{a} \in A\}$.
We obviously have 
$$\mathcal N_{\text{st}} \models \phi(\overline{a};\widehat{b_I}) \Leftrightarrow \overline{a} \in I\ \ \text{ for all }\overline{a}\in A' \text{.}$$
We have demonstrated that $\widetilde{\phi}(\overline{x};\overline{y})$ has IP because $\mathcal N_{\text{st}}$ is a model of $T_{\text{sim}}$ by Theorem \ref{thm:standard_sp}.
It means that $T_{\text{sim}}$ is not NIP.

We next assume that $T_{\text{sim}}$ is not NIP.
There exists an $L_{\text{sim}}$-formula having IP.
We may assume that it is of the form 
\[
\bigvee_{i=1}^N \widetilde{\psi}_{1i}(\overline{x};\overline{y}) \wedge \widetilde{\psi}_{2i}(\overline{x};\overline{y}) 
\] 
by Theorem \ref{thm:standard_sp2}.
Here, $\widetilde{\psi}_{ij}(\overline{x};\overline{y})$ is the standard conversion of an $L_i$-formula $\psi_{ij}(\overline{x};\overline{y})$ for $i=1,2$ and $1 \leq j \leq N$.
At least one of $\widetilde{\psi}_{ij}(\overline{x};\overline{y})$ has IP by \cite[Lemma 2.9]{S}.
We consider the case in which $\widetilde{\psi}_{11}(\overline{x};\overline{y})$ has IP and set $\psi(\overline{x};\overline{y})=\psi_{11}(\overline{x};\overline{y})$ and $\widetilde{\psi}(\overline{x};\overline{y})=\widetilde{\psi}_{11}(\overline{x};\overline{y})$ for simplicity.
We can prove in the same manner in the other cases.

There exists a model $\mathcal N=(N,\ldots)$ of $T_{\text{sim}}$, an infinite set $A$ of $|\overline{x}|$-tuples and a family $\{\overline{b_I}\in N^{|\overline{y}|}\;|\;I \subset A\}$ such that
$$\mathcal N \models \widetilde{\psi}(\overline{a};\overline{b_I}) \Leftrightarrow \overline{a} \in I\ \ \text{ for all }\overline{a} \in A \text{.}$$
We may assume that $\mathcal N$ is the standard simple product of a model $\mathcal M_1 =(M_1,\ldots)$ of $T_1$ and a model $\mathcal M_2 =(M_2,\ldots)$ of $T_2$ by Theorem \ref{thm:standard_sp}.
Let $\pi:N=M_1 \times M_2 \rightarrow M_1$ be the projection.
We get 
$$\mathcal M_1 \models \psi(\pi(\overline{a});\pi(\overline{b_I})) \Leftrightarrow \overline{a} \in I\ \ \text{ for all }\overline{a} \in A$$
by Lemma \ref{lem:standard}.
Take $\overline{a},\overline{b} \in A$ with $\overline{a} \neq \overline{b}$.
We can take a subset $I'$ of $A$ with $\overline{a} \in I'$ and $\overline{b} \not\in I'$.
Since we have $\mathcal M_1 \models \psi(\pi(\overline{a});\pi(\overline{b_{I'}}))$ and $\mathcal M_1 \not\models \psi(\pi(\overline{b});\pi(\overline{b_{I'}}))$, we get $\pi(\overline{a}) \neq \pi(\overline{b})$.
Set $A'=\{\pi(\overline{a})\;|\; \overline{a} \in A\}$ and, for any subset $J \subset A'$, we set $\overline{c_J}=\pi(\overline{b_I})$, where $I$ is the unique subset with $\pi(I)=J$.
We obviously have
$$\mathcal M_1 \models \psi(\overline{a};\overline{c_J}) \Leftrightarrow \overline{a} \in J\ \ \text{ for all }\overline{a} \in A'\text{.}$$
We have demonstrated that the theory $T_1$ is not NIP.
\end{proof}

We can prove a similar assertion for distal theories.
We first review indiscernible sequences.
\begin{definition}
Consider a theory $T$ of a language $L$.
Let $\mathbb M=(M,\ldots)$ be a monster model of $T$ and $A$ be a subset of $M$.
A sequence $(a_i\;|\;i \in I)$ of elements in $M$ indexed by a linearly ordered set $I$ is $A$-\textit{indiscernible} if, for any $n < \omega$, two strictly increasing $n$-tuples $i_1<\ldots<i_n$ and $j_1<\ldots<j_n$ and any $L(A)$-formula $\phi(x_1,\ldots, x_n)$,
we have 
$$\mathcal N \models \phi(a_{i_1},\ldots,a_{i_n}) \leftrightarrow \phi(a_{j_1},\ldots,a_{j_n})\text{.}$$
An $\emptyset$-indiscernible sequence is simply called \textit{indiscernible}.
\end{definition}

We get the following lemma:
\begin{lemma}\label{lem:indiscrenible}
Let $L_1$ and $L_2$ be two disjoint languages which do not have a function symbol and contains at least one constant symbol.
Let $T_1$ and $T_2$ be their theories.
Let $T_{\text{sim}}$ be the theory of simple product of $T_1$ and $T_2$.

Take a monster model $\mathcal N=(N,\ldots)$ of the theory $T_{\text{sim}}$.
The monster model $\mathcal N$ is isomorphic to the standard simple product of a model $\mathcal M_1=(M_1,\ldots)$ of $T_1$ and a model $\mathcal M_2=(M_2,\ldots)$ of $T_2$ by Theorem \ref{thm:standard_sp}.
The notation $\pi_k:N = M_1 \times M_2 \rightarrow M_k$ denotes the projection for $k=1,2$.

Take a small subset $A$ of $N$ and set $A_k=\pi_k(A)$ for $k=1,2$.
Consider sequences $(a_i\;|\; i \in I)$ and $(b_i\;|\; i \in I)$ of elements in $M_1$ and $M_2$ indexed by a linearly ordered set $I$, respectively.
The sequence $((a_i,b_i)\in N\;|\;i \in I)$ is $A$-indiscernible if and only if the sequence $(a_i\;|\; i \in I)$ is $A_1$-indiscernible and the sequence $(b_i\;|\; i \in I)$ is $A_2$-indiscernible.
\end{lemma}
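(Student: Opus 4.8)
The plan is to reduce everything to two facts established earlier: the normal form of Theorem \ref{thm:standard_sp2}, which rewrites every $L_{\text{sim}}$-formula (modulo $T_{\text{sim}}$) as a disjunction of simple formulas $\widetilde{\psi}_1 \wedge \widetilde{\psi}_2$, and the projection compatibility of Lemma \ref{lem:standard}, which gives $\mathcal N \models \widetilde{\phi}(\overline{c})$ if and only if $\mathcal M_k \models \phi(\pi_k(\overline{c}))$ for an $L_k$-formula $\phi$. The single observation driving both implications is that the standard conversion of an $L_1$-formula reads only the first coordinates of its arguments and parameters, while the conversion of an $L_2$-formula reads only the second coordinates. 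Since $A_1 = \pi_1(A)$ and $A_2 = \pi_2(A)$, every tuple of parameters from $A_k$ lifts to a tuple from $A$, so I may pass freely between $L_k(A_k)$-formulas on $\mathcal M_k$ and their conversions evaluated with parameters from $A$ on $\mathcal N$.

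For the forward direction I argue contrapositively. If, say, $(a_i \mid i \in I)$ fails to be $A_1$-indiscernible, there is an $L_1$-formula $\psi$, a parameter tuple $\pi_1(\overline{p})$ with $\overline{p} \in A$, and strictly increasing index tuples $i_1 < \cdots < i_n$ and $j_1 < \cdots < j_n$ witnessing the failure in $\mathcal M_1$. Applying Lemma \ref{lem:standard} to the standard conversion $\widetilde{\psi}$ with parameters $\overline{p}$, the $L_{\text{sim}}(A)$-formula $\widetilde{\psi}$ separates the subtuple $((a_{i_1},b_{i_1}),\ldots,(a_{i_n},b_{i_n}))$ from $((a_{j_1},b_{j_1}),\ldots,(a_{j_n},b_{j_n}))$, contradicting $A$-indiscernibility of $((a_i,b_i) \mid i \in I)$. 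The case of $(b_i \mid i \in I)$ is symmetric, using $L_2$-formulas and second coordinates.

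For the backward direction, assume $(a_i)$ is $A_1$-indiscernible and $(b_i)$ is $A_2$-indiscernible, and let $\phi$ be an arbitrary $L_{\text{sim}}(A)$-formula. Treating its parameters as extra free variables and then substituting, Theorem \ref{thm:standard_sp2} rewrites $\phi$ modulo $T_{\text{sim}}$ as a Boolean combination of conversions $\widetilde{\psi}_{1i}$ and $\widetilde{\psi}_{2i}$ of $L_1$- and $L_2$-formulas, all evaluated with parameters from $A$. By Lemma \ref{lem:standard}, the truth value of each $\widetilde{\psi}_{1i}$ on an increasing subtuple of $((a_i,b_i))$ equals the truth value of $\psi_{1i}$ on the corresponding subtuple of $(a_i)$ with parameters in $A_1$, which is invariant under reindexing by $A_1$-indiscernibility; likewise each $\widetilde{\psi}_{2i}$ is controlled by $A_2$-indiscernibility of $(b_i)$. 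Since every constituent of the Boolean combination is preserved, so is $\phi$, yielding $A$-indiscernibility of $((a_i,b_i) \mid i \in I)$.

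The only point that needs care, and the step I expect to be the main obstacle, is the bookkeeping of parameters: Theorem \ref{thm:standard_sp2} is stated for parameter-free $L_{\text{sim}}$-formulas, so I must first absorb the parameters into free variables, apply the normal form, and only then substitute elements of $A$, all the while checking that the projections $\pi_k(\overline{p})$ land in $A_k$ so that $A_k$-indiscernibility is applicable. Once this is arranged, both directions follow immediately from the two cited results.
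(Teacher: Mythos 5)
Your proposal is correct and follows essentially the same route as the paper's proof: the forward direction via standard conversions of $L_k(A_k)$-formulas and Lemma \ref{lem:standard}, and the backward direction via the normal form of Theorem \ref{thm:standard_sp2} combined with Lemma \ref{lem:standard}, handling the disjunction conjunct by conjunct. Your contrapositive phrasing of the forward direction and your explicit bookkeeping of parameters (lifting tuples from $A_k$ to $A$ and absorbing parameters into free variables before applying the normal form) are only presentational refinements of what the paper does implicitly.
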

\begin{proof}
We first demonstrate that the sequence $(a_i\;|\; i \in I)$ is $A_1$-indiscernible under the assumption that $((a_i,b_i)\in N\;|\;i \in I)$ is $A$-indiscernible.
We can demonstrate that the sequence $(b_i\;|\; i \in I)$ is $A_2$-indiscernible in the same manner.
Take an arbitrary $L_1(A_1)$-formula $\phi(x_1,\ldots, x_n)$.
The standard conversion $\widetilde{\phi}(x_1,\ldots, x_n)$ of $\phi(x_1,\ldots, x_n)$ is an $L_{\text{sim}}(A)$-formula by Lemma \ref{lem:standard}.
Take two strictly increasing $n$-tuples $i_1<\ldots<i_n$ and $j_1<\ldots<j_n$.
We have 
\begin{align*}
&\mathcal N\models \widetilde{\phi}((a_{i_1},b_{i_1}),\ldots,(a_{i_n},b_{i_n})) \Leftrightarrow \mathcal M_1 \models \phi(a_{i_1},\ldots a_{i_n}) \text{ and }\\ 
&\mathcal N\models \widetilde{\phi}((a_{j_1},b_{j_1}),\ldots,(a_{j_n},b_{j_n})) \Leftrightarrow \mathcal M_1  \models \phi(a_{j_1},\ldots a_{j_n})
\end{align*}
by Lemma \ref{lem:standard}.
We also have 
$$\mathcal N \models \widetilde{\phi}((a_{i_1},b_{i_1}),\ldots,(a_{i_n},b_{i_n}))  \leftrightarrow \widetilde{\phi}((a_{j_1},b_{j_1}),\ldots,(a_{j_n},b_{j_n}))$$
by the assumption.
We have demonstrated that $\mathcal M_1 \models \phi(a_{i_1},\ldots a_{i_n}) \leftrightarrow \phi(a_{j_1},\ldots a_{j_n})$.

We next demonstrate the opposite implication.
Take two strictly increasing $n$-tuples $i_1<\ldots<i_n$ and $j_1<\ldots<j_n$.
We also take an arbitrary $L(A)$-formula $\phi(x_1,\ldots, x_n)$.
We want to show that $\mathcal N \models \phi((a_{i_1},b_{i_1}),\ldots,(a_{i_n},b_{i_n})) \leftrightarrow \phi((a_{j_1},b_{j_1}),\ldots,(a_{j_n},b_{j_n}))$.
We may assume that $\phi(x_1,\ldots,x_n)$ is of the form $$\widetilde{\psi}_1(x_1,\ldots, x_n) \wedge \widetilde{\psi}_2(x_1,\ldots, x_n)\text{,}$$ 
where $\widetilde{\psi}_k(x_1,\ldots,x_n)$ is the standard conversion of a $L(A_k)$-formula $\psi_k(x_1,\ldots,x_n)$ for $k=1,2$ by Lemma \ref{lem:standard} and Theorem \ref{thm:standard_sp2}.
We obviously have
\begin{align*}
&\mathcal N \models \phi((a_{i_1},b_{i_1}),\ldots,(a_{i_n},b_{i_n}))\\
&  \Leftrightarrow \mathcal M_1 \models \psi_1(a_{i_1},\ldots, a_{i_n}) \text{ and } \mathcal M_2 \models \psi_2(b_{i_1},\ldots, b_{i_n}) \\
&  \Leftrightarrow \mathcal M_1 \models \psi_1(a_{j_1},\ldots, a_{j_n}) \text{ and } \mathcal M_2 \models \psi_2(b_{j_1},\ldots, b_{j_n})\\
&  \Leftrightarrow \mathcal N \models \phi((a_{j_1},b_{j_1}),\ldots,(a_{j_n},b_{j_n}))
\end{align*}
by Lemma \ref{lem:standard}.
We have finished the proof.
\end{proof}

We now recall the definition of a distal theory.
\begin{definition}\label{def:distal}\cite{S4}
A theory $T$ is \textit{distal} if, for any subset $A$ of the monster model, an indiscernible sequence $(a_i\;|\; i \in I)$ such that 
\begin{itemize}
\item $I =I_1+(c)+I_2$, and both $I_1$ and $I_2$ are infinite without endpoints,
\item $(a_i\;|\;i \in I_1+I_2)$ is $A$-indiscernible,
\end{itemize}
is $A$-indiscernible.
Here, the notation $J_1+J_2$ is a concatenation of $J_1$ followed by $J_2$ for linearly ordered sets $J_1$ and $J_2$.
The notation $(c)$ denotes the sequence consisting of a single element $c$. 
\end{definition} 

We get a counterpart of Theorem \ref{thm:inherit} for distal theories here.
\begin{theorem}\label{thm:inherit2}
Let $L_1$ and $L_2$ be two disjoint languages which do not have a function symbol and contains at least one constant symbol.
Let $T_1$ and $T_2$ be their theories.
Let $T_{\text{sim}}$ be the theory of simple product of $T_1$ and $T_2$.
The theory $T_{\text{sim}}$ is distal if and only if both the theories $T_1$ and $T_2$ are distal.
\end{theorem}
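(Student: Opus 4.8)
The strategy is to reduce both implications to the coordinatewise criterion of Lemma \ref{lem:indiscrenible}, which lets distality of $T_{\text{sim}}$ be read off as the conjunction of distality in each coordinate. Fix a monster model $\mathcal N$ of $T_{\text{sim}}$ and, using Theorem \ref{thm:standard_sp}, write it as the standard simple product of $\mathcal M_1$ and $\mathcal M_2$ with projections $\pi_k \colon N \to M_k$. The preliminary point I would settle first is that $\mathcal M_1$ and $\mathcal M_2$ are themselves monster models of $T_1$ and $T_2$; this is what allows the abstract definition of distality for the factor theories to be invoked on sequences living in $\mathcal M_1$ and $\mathcal M_2$. I would prove it by a saturation-transfer argument: given a small $L_k$-type $p$ over $A_k \subseteq M_k$, lift $A_k$ to a set $A \subseteq N$ along the surjection $\pi_k$ and consider $\{\widetilde{\phi} \mid \phi \in p\}$; by Lemma \ref{lem:standard} finite satisfiability of $p$ in $\mathcal M_k$ transfers to finite satisfiability of this set in $\mathcal N$, so saturation of $\mathcal N$ realizes it, and projecting the realization by $\pi_k$ realizes $p$. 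Strong homogeneity transfers in the same way. (That $\mathcal N$ exists at all, i.e.\ that $T_{\text{sim}}$ is complete, follows from Theorem \ref{thm:standard_sp2}: every $L_{\text{sim}}$-sentence is equivalent to a Boolean combination of sentences $\widetilde{\sigma}$ with $\sigma$ an $L_k$-sentence, and each such $\sigma$ is decided by the complete theory $T_k$.)

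For the implication that distality of $T_1$ and $T_2$ yields distality of $T_{\text{sim}}$, I would take the distal configuration directly in $\mathcal N$: a small $A \subseteq N$ and an indiscernible sequence $(e_i \mid i \in I)$ with $I = I_1 + (c) + I_2$ as in Definition \ref{def:distal}, whose restriction to $I_1 + I_2$ is $A$-indiscernible. Writing $e_i = (a_i, b_i)$ and $A_k = \pi_k(A)$, Lemma \ref{lem:indiscrenible}, applied once with parameter set $\emptyset$ and once with parameter set $A$, shows that $(a_i \mid i \in I)$ is indiscernible with its $(I_1 + I_2)$-restriction $A_1$-indiscernible, and symmetrically for $(b_i \mid i \in I)$. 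Distality of $T_1$ and $T_2$, applied in the monster models $\mathcal M_1$ and $\mathcal M_2$, then upgrades these to full $A_1$- and $A_2$-indiscernibility of $(a_i \mid i \in I)$ and $(b_i \mid i \in I)$. A final application of the converse direction of Lemma \ref{lem:indiscrenible} reassembles them into $A$-indiscernibility of $(e_i \mid i \in I)$, which is exactly the distal conclusion for $T_{\text{sim}}$.

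For the converse I would argue contrapositively, paralleling the proof of Theorem \ref{thm:inherit}, and by symmetry it suffices to treat $T_1$. Suppose $T_1$ is not distal; since $\mathcal M_1 = \pi_1(\mathcal N)$ is a monster model of $T_1$, a witnessing configuration already lives in $\mathcal M_1$: a small $A_1$ and an indiscernible sequence $(a_i \mid i \in I)$, $I = I_1 + (c) + I_2$, whose $(I_1 + I_2)$-restriction is $A_1$-indiscernible but whose full form is not $A_1$-indiscernible. Fix any point $d \in M_2$ (for instance the interpretation of a constant), set $e_i = (a_i, d)$ and $A = \{(\alpha, d) \mid \alpha \in A_1\}$, so that the second-coordinate sequence is constant, hence totally indiscernible over every parameter set, and $\pi_1(A) = A_1$. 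Lemma \ref{lem:indiscrenible} then gives at once that $(e_i \mid i \in I)$ is indiscernible and that its $(I_1 + I_2)$-restriction is $A$-indiscernible, while the failure of $A_1$-indiscernibility of $(a_i \mid i \in I)$ forces, through the same lemma, the failure of $A$-indiscernibility of $(e_i \mid i \in I)$. This exhibits a violation of distality in the monster model $\mathcal N$, so $T_{\text{sim}}$ is not distal.

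The genuinely delicate step is the preliminary one, namely verifying that the coordinate projections of a monster model of $T_{\text{sim}}$ are again monster models of $T_1$ and $T_2$; without it, the abstract definition of distality for the factor theories cannot legitimately be applied to the projected sequences, and the witness in the converse direction need not be found inside $\mathcal M_1$. Once that is in hand, both implications are mechanical consequences of Lemma \ref{lem:indiscrenible}, the only combinatorial input being that a constant sequence is indiscernible over an arbitrary parameter set.
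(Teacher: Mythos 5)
Your proposal is correct and takes essentially the same route as the paper: the paper's entire proof of Theorem \ref{thm:inherit2} is ``Immediate from Lemma \ref{lem:indiscrenible} and Definition \ref{def:distal},'' and your argument is exactly that coordinatewise reduction, spelled out in both directions (projecting the distal configuration via the lemma, and pairing a non-distal witness in one factor with a constant sequence in the other). The preliminary bookkeeping you supply---completeness of $T_{\text{sim}}$ via Theorem \ref{thm:standard_sp2} and the saturation transfer showing the projections of a monster model of $T_{\text{sim}}$ are monster models of $T_1$ and $T_2$---is left implicit in the paper, and your treatment of it is sound.
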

\begin{proof}
Immediate from Lemma \ref{lem:indiscrenible} and Definition \ref{def:distal}.
\end{proof}

\section{NIP and non-NTP$_2$ locally o-minimal theories}\label{sec:locally}
We first recall the notion of dp-minimality.
\begin{definition}\cite{DGL}
Fix a structure $\mathcal M=(M,\ldots)$.
An \text{ICT pattern} in $\mathcal M$ consists of a pair of formulas $\phi(x,\overline{y})$ and $\psi(x,\overline{y})$ and sequences $\{\overline{a_i}\;|\; i \in \omega\}$ and $\{\overline{b_i}\;|\; i \in \omega\}$ from $M$ so that, for all $i,j \in \omega$, the following is consistent:
$$\phi(x,\overline{a_i}) \wedge \psi(x,\overline{b_j}) \wedge \bigwedge_{l \neq i} \neg \phi(x,\overline{a_l}) \wedge \bigwedge_{k \neq j} \neg \psi(x,\overline{b_k})\text{.}$$

A theory is \textit{dp-minimal} if the Dp-rank $\operatorname{dp-rk}(x=x,\emptyset)=1$ for any finite tuple of variables $x$.
It is equivalent to the condition that no models of it have ICT patterns \cite[Proposition 4.22]{S}.
\end{definition}

The following proposition claims that dp-minimality classifies o-minimal structures in definably complete expansions of ordered groups.
\begin{proposition}\label{prop:dp-minimal}
A definably complete expansion of an ordered group is o-minimal if and only if it is dp-minimal 
\end{proposition}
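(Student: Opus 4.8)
The plan is to prove the two implications separately, with essentially all of the work residing in the reverse direction. For the forward implication, o-minimality implies weak o-minimality trivially, and a weakly o-minimal structure is dp-minimal by \cite[A.1.3]{S}; hence an o-minimal expansion has $\operatorname{dp-rk}(x=x,\emptyset)=1$ and admits no ICT pattern, which is exactly what dp-minimality demands. So this direction is a one-line reduction to the result already cited in the introduction.

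For the converse, let $\mathcal M=(M;0,+,<,\ldots)$ be a definably complete, dp-minimal expansion of an ordered group. Definable completeness forces the order to be dense and without endpoints, which already excludes the discrete counterexamples such as Presburger arithmetic. I would reduce o-minimality to two claims. First, a \emph{weak o-minimality} step: every definable $X\subseteq M$ is a finite union of convex sets. Second, a \emph{completeness upgrade}: in a definably complete structure every definable convex set has a supremum and an infimum in $M\cup\{\pm\infty\}$, hence is a point or an interval (with or without endpoints); combining this with the first step shows that every definable unary set is a finite union of points and open intervals, i.e.\ o-minimality. The second step is routine and uses only definable completeness.

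The hard part will be the weak o-minimality step. Here I would argue by contradiction: if some definable $X$ is not a finite union of convex sets, then, using definable completeness to realize the suprema and infima of its convex components as genuine elements of $M$, the topological boundary of $X$ is an infinite definable set across which membership in $X$ alternates infinitely often. The main obstacle is to convert this single oscillating set into the two-dimensional combinatorial configuration needed to contradict dp-minimality. I would exploit the group operation to form the definable family of translates $\phi(x,a) := (x-a\in X)$, and combine it with a transverse family to lay down an ICT pattern in the sense of \cite{DGL}, contradicting the absence of ICT patterns guaranteed by dp-minimality \cite[Proposition 4.22]{S}. In effect this step reproves, in the presence of an ordered group together with definable completeness, the structure theory of dp-minimal ordered sets; the delicate point is that definable completeness is precisely what pins the boundary points down as elements of $M$, so that the required alternations can be realized simultaneously and consistently.
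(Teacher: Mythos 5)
Your forward direction is fine and matches the paper, which simply cites \cite[Theorem A.6]{S} for ``o-minimal implies dp-minimal''. The converse, however, has a genuine gap at exactly the point you yourself flag as the hard part. First, your opening reduction rests on a false claim: definable completeness does \emph{not} force the order to be dense. The ordered group $(\mathbb Z;0,+,<)$ is definably complete in the usual sense (every definable set bounded above has a maximum, hence a supremum), it is dp-minimal, and it is not o-minimal; so Presburger arithmetic is not ``already excluded'' by definable completeness --- it is excluded only because the results the paper invokes carry density (Miller's standing hypothesis of a dense linear order in \cite{M}) or divisibility hypotheses. Second, and more seriously, the same example shows your proposed mechanism for the weak o-minimality step cannot work as described: in $(\mathbb Z;0,+,<)$ the definable set $2\mathbb Z$ has infinitely many convex components, membership in it alternates infinitely often, and the family of translates $\phi(x,a):=(x-a\in 2\mathbb Z)$ is available, yet no ICT pattern exists, since the structure is dp-minimal. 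Hence ``an infinite definable boundary across which membership alternates, plus translates, plus a transverse family'' does not by itself yield an ICT pattern; any correct argument must use density (or divisibility) in an essential and non-obvious way, and the unspecified phrase ``combine it with a transverse family'' is precisely where the entire difficulty lives.

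For comparison, the paper does not attempt this construction at all: it quotes Simon's theorem that dp-minimality yields weak o-minimality in this ordered-group setting \cite[Corollary 3.7]{S2}, and then Miller's result that a definably complete (in his dense-order setting) weakly o-minimal structure is o-minimal \cite[Proposition 2.2]{M}. Your two-step skeleton --- weak o-minimality first, then a completeness upgrade --- is the same as the paper's, and your completeness-upgrade paragraph is indeed the routine half; but what you offer for the first step is a plan to reprove \cite[Corollary 3.7]{S2} whose only concrete ingredient fails in the discrete case you wrongly believed excluded. As written, the proposal does not prove the converse.
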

\begin{proof}
An o-minimal theory is dp-minimal in general. 
See \cite[Theorem A.6]{S} for instance.
The opposite implication follows from \cite[Corollary 3.7]{S2} and \cite[Proposition 2.2]{M}.
\end{proof}

We finally demonstrate the theorem announced in Section \ref{sec:intro}.
We next review the definition of NTP$_2$ theories.
\begin{definition}\cite{C}\label{def:ntp2}
A formula $\phi(\overline{x};\overline{y})$ has \textit{TP$_2$} if there is an array $(b_i^t\;|\;i<\omega,t<\omega)$ of tuples of size $|\overline{y}|$ and $k<\omega$ such that:
\begin{itemize}
\item for any $\eta:\omega \rightarrow \omega$, the conjunction $\bigwedge_{t<\omega}\phi(\overline{x};b_{\eta(t)}^t)$ is consistent;
\item for any $t<\omega$, $\{\phi(\overline{x};b_i^t)\;|\;i<\omega\}$ is $k$-inconsistent.
\end{itemize}
This condition is equivalent to the following condition by the compactness theorem.
There is $k<\omega$ and, for any $m<\omega$ and $n<\omega$, there exists an array $(b_i^t\;|\;i <m,t<n)$ of tuples of size $|\overline{y}|$ such that:
\begin{itemize}
\item for any $\eta:\{t<n\} \rightarrow \{t<m\}$, the conjunction $\bigwedge_{t<n}\phi(\overline{x};b_{\eta(t)}^t)$ is consistent;
\item for any $t<n$, $\{\phi(\overline{x};b_i^t)\;|\;i<m\}$ is $k$-inconsistent.
\end{itemize}

The formula $\phi(\overline{x};\overline{y})$ is \textit{NTP$_2$} if it does not have TP$_2$.
A theory $T$ is \textit{NTP$_2$} if all formulas are \textit{NTP$_2$}.
\end{definition}

Here is the main theorem of this paper.

\begin{theorem}\label{thm:last}
There exist NIP and non-NTP$_2$ theories satisfying all the following conditions:
\begin{enumerate}
\item[(1)] It is not o-minimal;
\item[(2)] All models are strongly locally o-minimal; 
\item[(3)] It has a model which is an expansion of the linearly ordered abelian group over the reals $(\mathbb R\;;\;0,+,<)$.
\end{enumerate}
\end{theorem}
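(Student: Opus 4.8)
The plan is to read the statement as asserting \emph{two} separate constructions — a NIP theory and a (different) non-NTP$_2$ theory, each a theory of simple products meeting (1)--(3). Since NIP implies NTP$_2$, any non-NTP$_2$ theory is automatically non-NIP, so the two examples are genuinely distinct; this is the content of the remark ``non-NTP$_2$ implies non-NIP'' in the introduction. Throughout I fix $T_1=\operatorname{Th}(\mathbb R;0,+,<)$, the (o-minimal) theory of divisible ordered abelian groups, and let $T_2$ be a linearly ordered theory (its language containing an order predicate and a constant) to be chosen in each case; $T_{\text{sim}}$ denotes the simple product of $T_1$ and $T_2$. By Theorem \ref{thm:standard_sp} every model of $T_{\text{sim}}$ is a standard simple product $\mathcal M_1\times\mathcal M_2$ with $\mathcal M_1\models T_1$, $\mathcal M_2\models T_2$, and I always treat $\mathcal M_1$ as the \emph{inner} coordinate. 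The lexicographic order $x<_{\mathrm{lex}} y$, defined by $R_{<_2}(x,y)\vee((x\sim_2 y)\wedge R_{<_1}(x,y))$ where $R_{<_k}$ is the order predicate of $L_k$, is an $L_{\text{sim}}$-formula, and since $\mathcal M_1$ is a dense ordered group without endpoints it defines on \emph{every} model a dense linear order without endpoints.

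The three conditions will then be read off uniformly from the product structure. For (2), observe that in $<_{\mathrm{lex}}$ every point $(a_0,b_0)$ is interior to its block $\mathcal M_1\times\{b_0\}$, and a basic $<_{\mathrm{lex}}$-neighbourhood of it is an $\mathcal M_1$-interval inside that block. By Theorem \ref{thm:standard_sp2} any definable subset of the universe is a finite disjunction of conjunctions $\widetilde{\psi_1}(x)\wedge\widetilde{\psi_2}(x)$; restricted to the block such a conjunct is either the $\mathcal M_1$-definable set $\psi_1^{\mathcal M_1}\times\{b_0\}$ (when $\mathcal M_2\models\psi_2(b_0)$) or empty, hence a finite union of points and intervals by o-minimality of $\mathcal M_1$. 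Thus every model is strongly locally o-minimal, its local structure at each point being that of the o-minimal $\mathcal M_1$. For (1), whenever $\mathcal M_2$ is infinite the relation $\sim_2$ is definable with infinitely many infinite classes $\mathcal M_1\times\{b\}$, which no o-minimal structure admits (a definable transversal would be a finite union of points and intervals, and an interval of transversal points would force uncountably many pairwise disjoint intervals); so $T_{\text{sim}}$ has a non-o-minimal model and is not o-minimal. For (3), take $\mathcal M_1=(\mathbb R;0,+,<)$ and $\mathcal M_2$ a \emph{one-point} model of $T_2$: the product is isomorphic to $\mathbb R$, and by Lemma \ref{lem:standard} the $L_1$-reduct of this $L_{\text{sim}}$-structure is exactly $(\mathbb R;0,+,<)$, so the model is an expansion of $(\mathbb R;0,+,<)$. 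This forces the only standing requirement on $T_2$: it must possess both a one-point model and infinite models, so it is an (admissible) incomplete theory.

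For the NIP example I take $T_2$ to be the theory of linear orders (with a constant), which is NIP and has both one-point and infinite models; then $T_1,T_2$ are NIP, so $T_{\text{sim}}$ is NIP by Theorem \ref{thm:inherit}, and the previous paragraph yields (1)--(3). For the non-NTP$_2$ example I take $T_2$ to axiomatise a linear order together with a family of parametrised equivalence relations but \emph{without} genericity axioms: this has one-point models, and it also has a model $\mathcal M_2$ in which the equivalence relations are cross-cutting, where the standard formula $\psi(x;\overline y)$ has TP$_2$. The transfer I need is the easy direction of the IP argument in Theorem \ref{thm:inherit}: pairing a fixed $c\in\mathcal M_1$ with each coordinate of the witnessing array produces, via Lemma \ref{lem:standard}, an array in $\mathcal M_1\times\mathcal M_2$ witnessing TP$_2$ for the standard conversion $\widetilde\psi$ (consistency of a transversal reduces to consistency of the corresponding $\psi$-conditions in $\mathcal M_2$, and $k$-inconsistency transfers identically); since $\mathcal M_1\times\mathcal M_2\models T_{\text{sim}}$ by Theorem \ref{thm:standard_sp}, the theory $T_{\text{sim}}$ is non-NTP$_2$. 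Conditions (1)--(3) hold verbatim, as their proofs never invoked tameness of $\mathcal M_2$.

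The main obstacle is condition (2) in the non-NTP$_2$ case: one must be certain that the wildness of $T_2$ cannot leak into the local topology. This is precisely what the normal form of Theorem \ref{thm:standard_sp2} secures — every definable subset of the line is built from first-coordinate sets (o-minimally tame) and second-coordinate sets (locally all-or-nothing, because every point is interior to its block) — so a TP$_2$-configuration of $\mathcal M_2$ is never visible inside a small interval; the TP$_2$ lives entirely in the cross-block combinatorics. A secondary point is the TP$_2$-transfer lemma, routine but to be written in parallel with the IP computation of Theorem \ref{thm:inherit}. The contrast advertised in the introduction then follows at once: both theories are strongly locally o-minimal and have a model expanding $(\mathbb R;0,+,<)$ yet fail o-minimality, one being NIP and the other not even NTP$_2$, whereas Proposition \ref{prop:dp-minimal} shows dp-minimality would force o-minimality.
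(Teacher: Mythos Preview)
Your approach is correct for the theorem as literally stated, but it differs substantially from the paper's construction and, in one respect, delivers less than the paper intends.

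The paper takes as factors the o-minimal structure $[0,1)_{\text{def}}$ (induced from a fixed o-minimal expansion of the real ordered group) and the discrete ordered group $(\mathbb Z;0,+,<)$, or in the non-NTP$_2$ case an expansion of $(\mathbb Z;0,+,<)$ by a binary predicate encoding all finite subsets of $\mathbb Z$. Condition~(3) then comes from the identification $\mathbb R\cong[0,1)\times\mathbb Z$ of \cite[Example~15.3]{KTTT}; condition~(2) is quoted from \cite[Theorem~19]{KTTT} (the simple product of an o-minimal structure with a discrete order is strongly locally o-minimal); and~(1) is immediate since $\mathbb Z$ is definable in that very model. Your route instead fixes $T_1=\operatorname{Th}(\mathbb R;0,+,<)$ and lets $T_2$ be an \emph{incomplete} linearly ordered theory admitting both one-point and infinite models, obtaining~(3) from the degenerate product with a one-element $\mathcal M_2$. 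Your direct verification of~(2) via Theorem~\ref{thm:standard_sp2} and convexity of the $\sim_2$-blocks is sound and in fact more general than the cited \cite{KTTT} result, since it nowhere uses discreteness of~$\mathcal M_2$; the TP$_2$ transfer you sketch is likewise the routine analogue of the IP transfer in Theorem~\ref{thm:inherit}.

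The one substantive shortfall is this: in your construction the model witnessing~(3) is interdefinable with $(\mathbb R;0,+,<)$ itself, hence o-minimal and NTP$_2$; the non-o-minimality and the TP$_2$ pattern live only in \emph{other} completions of your incomplete $T_{\text{sim}}$. The paper's construction, by contrast, yields a \emph{single} model on $\mathbb R$ that is simultaneously a genuine non-o-minimal expansion of $(\mathbb R;0,+,<)$ and (in the second example) carries the TP$_2$ array---which is precisely what the discussion in Section~\ref{sec:intro} is advertising. Your examples satisfy the theorem verbatim but do not actually exhibit a non-NTP$_2$ locally o-minimal expansion of the real ordered group.
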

\begin{proof}
Any ordered abelian group is NIP by \cite[Theorem A.8]{S}.
In particular, the theory $T_{\mathbb Z}=\operatorname{Th}(\mathcal Z)$ of the ordered abelian group $\mathcal Z=(\mathbb Z\;;\;0,+,<)$ is NIP.
We demonstrate that there exists an expansion of $\mathcal Z$ whose theory has TP$_2$.
Let $m$ and $n$ be positive integers.
Consider the set $S_{n,m}$ of all the maps from $\{t \in \mathbb Z\;|\;1 \leq t \leq n\}$ to $\{t \in \mathbb Z\;|\;1 \leq t \leq m\}$.
The disjoint union $\mathcal S = \bigcup_{m,n}S_{n,m}$ is a countable set.
Take a bijection $\iota:\omega \rightarrow \mathcal S$.
For any $\eta \in S_{n,m}$, we set $X_{\eta}=\{(k-1)m+\eta(k)\;|\;1 \leq k \leq n\}$.

Consider the structure $(\mathbb Z\;;\;0,+,<,P)$, where $P$ is a binary predicate symbol.
We define $P$ by $$\models P(i,j) \Leftrightarrow j \in X_{\iota(i)}\text{,}$$
where $\iota(i)$ is a map from $\{t \in \mathbb Z\;|\;1 \leq t \leq n\}$ to $\{t \in \mathbb Z\;|\;1 \leq t \leq m\}$ for some $n$ and $m$.

Let $\{S_i\}_{i < \omega}$ be the indexed family of all the finite subsets of $\mathbb Z$.
Consider the structure $(\mathbb Z\;;\;0,+,<,P)$, where $P$ is a binary predicate symbol.
We define $P$ by $$\models P(i,j) \Leftrightarrow j \in S_i\text{.}$$
It is obvious that the formula $\phi(x,y):=P(x,y)$ has TP$_2$.
This structure is denoted by  $\mathcal Z'$.
In this proof, the notation $T'_{\mathbb Z}$ denotes the theory of this structure.

Consider an arbitrary o-minimal structure $\mathcal M$ over the ordered abelain group of reals.
Consider the structure $[0,1)_{\text{def}}$ whose universe is $[0,1)$ given in \cite[Definition 2]{KTTT}.
It is an o-minimal structure.
The notation $T_{\text{o-min}}$ is the theory of this structure.
The theory $T_{\text{o-min}}$ is NIP by \cite[Example 2.12]{S}.

Let $T_{\text{sim}}$ be the theory of simple product of $T_{\mathbb Z}$ and $T_{\text{o-min}}$.
Let $T'_{\text{sim}}$ be that of $T'_{\mathbb Z}$ and $T_{\text{o-min}}$.
They are the desired theories.
In fact, the former is NIP by Theorem \ref{thm:inherit}.
It is obvious that the standard conversion of the formula $\phi(x,y)=P(x,t)$ has TP$_2$.
Consider an arbitrary model of one of them.
Since it is a simple product of a discrete order and an o-minimal structure by Theorem \ref{thm:standard_sp} and Corollary \ref{cor:standard_sp}, it is strongly locally o-minimal by \cite[Theorem 19]{KTTT}.
The standard simple product of $\mathcal Z$ and $[0,1)_{\text{def}}$ is isomorphic to an expansion of linearly ordered abelian group over the reals $(\mathbb R\;;\;0,+,<)$ by \cite[Example 15.3]{KTTT}.
The standard simple product of $\mathcal Z'$ and $[0,1)_{\text{def}}$ is obviously an expansion of the above standard simple product.
Hence, both $T_{\text{sim}}$ and $T'_{\text{sim}}$ have a model which is an expansion of the linearly ordered abelian group over the reals $(\mathbb R\;;\;0,+,<)$.
These models are not o-minimal trivially because the set $\mathbb Z$ is definable in both structures. 
\end{proof}

\begin{remark}
We can construct distal and non-distal locally o-minimal theories satisfying the conditions (1) and (2) of Theorem \ref{thm:last}.
We use distal and non-distal examples in \cite{HN} and Theorem \ref{thm:inherit2} instead of Theorem \ref{thm:inherit} in the proof.
It is the same for strongly dependence.
\end{remark}

\end{document}